\newcommand{\eN}{{\bar \N}}
\newcommand{\eomega}{{\bar \omega}}
\newtheorem{thm}{Theorem}
\newtheorem{cor}{Corollary}
\newtheorem{prop}{Proposition}
\newtheorem{lem}{Lemma}
\theoremstyle{definition}
\newtheorem{defin}[thm]{Definition}
\newtheorem{rem}{Remark}
\numberwithin{equation}{section}
\def\eq#1{{\rm(\ref{#1})}}
\def\Eq#1#2{\ifthenelse{\equal{#1}{*}}
  {\begin{equation*}\begin{aligned}[]#2\end{aligned}\end{equation*}}
  {\begin{equation}\begin{aligned}[]\label{#1}#2\end{aligned}\end{equation}}}
\newcommand\R{\mathbb{R}}
\newcommand\N{\mathbb{N}}
\newcommand\Z{\mathbb{Z}}
\newcommand\calA{\mathcal{A}}
\newcommand\calE{\mathcal{E}}
\newcommand\HD{d}
 \newcommand{\floor}[1]{\lfloor #1 \rfloor}
\newcommand\XS{\mathcal{X}}
\DeclareMathOperator{\hd}{hd}
\DeclareMathOperator{\Start}{\textsc{Start}}
\DeclareMathOperator{\Move}{\textsc{Move}}
\DeclareMathOperator{\Play}{Play}
\DeclareMathOperator{\pr}{pr}
\DeclareMathOperator{\bit}{bit}
\title{Error recognition in the Cantor cube}
\subjclass[2010]{54H05; 91A05; 91A44}
\keywords{thin sets; xor sets; Banach-Mazur game; capturing strategy; decomposition of Cantor cube}
\author{Pawe\l{} Pasteczka}
\thanks{Author is greatful to M.~Boja\'nczyk, E.~Jab{\l}o\'nska and D.~Niwi\'nski for their valuable remarks}
\address{Institute of Mathematics, Pedagogical University of Krak\'ow, Podchor\k{a}\.{z}ych str 2, 30-084 Krak\'ow, Poland}
\email{pawel.pasteczka@up.krakow.pl}
\begin{document}
\begin{abstract}
Based on the notion of thin sets introduced recently by T.~Banakh, Sz.~G\l{}\k{a}b, E.~Jab\l{}o\'nska and J.~Swaczyna we deliver a study of the infinite single-message transmission protocols. Such protocols are associated with a set of admissible messages (i.e. subsets of the Cantor cube $\Z_2^\omega$).  

Using Banach-Mazur games we prove that all 
protocols detecting errors are Baire spaces and generic (in particular maximal) ones are not neither Borel nor meager.

We also show that the Cantor cube can be decomposed to two thin sets which can be considered as the infinite counterpart of the parity bit. This result is related to so-called xor-sets defined by D.~Niwi\'nski and E.~Kopczy\'nski in 2014.
\end{abstract}
\maketitle

\section{Introduction}

We deliver the error-recognition and error-correction approach to single-message transmission protocols which allows to send infinite messages only (i.e. elements of the Cantor cube $\Z_2^\omega$). 
The background of this note are thin sets introduced recently in Banakh--G\l{}\k{a}b--Jab\l{}onska--Swaczyna \cite{BanGlaJabSwa1803}.
%

During our consideration we use all four types of naural numbers (including and excluding both zero and infinity). Thus, in order to avoid misunderstandings, let us define formally
\Eq{*}{
\begin{aligned}[c]
\omega&:=\{0,1,\dots,\}, \\
\N&:=\{1,2,\dots\},
\end{aligned}
\qquad\qquad
\begin{aligned}[c]
\eomega:&=\omega\cup\{\omega\}, \\
\eN:&=\N\cup\{\omega\}.
\end{aligned}
}

Now we recall that the Hamming distance \cite{Ham50} is the function which measures the number of bits where two vectors are different. More precisely we define $\hd\colon \bigcup_{n \in\eN}\Z_2^n \times \Z_2^n \to \eomega$ by
\Eq{*}{
\hd(x, y) := \big|\{k \colon x(k) \ne y(k)\}\big|\qquad x,y \in \Z_2^n,\qquad n \in \eN.
}

One can easy show that $\hd$ is a metric on each $\Z_2^n$ ($n \in \N$) and an extended metric on $\Z_2^\omega$. Therefore we can define the equivalence relation $\sim$ on $\Z_2^\omega$ by 
\Eq{*}{
x \sim y :\iff \hd(x,y)<+\infty.
}
Obviously $\hd$ is a metric on every element of $\Z_2^\omega/_\sim$. 

This notion is deeply connected with the errors recognition, correction and checksums. 
Indeed, once we create a transmission protocol we shall to allow some errors during transmission.

\subsection*{Protocol descripton}
This part is based on \cite[section~4]{Ada91b} (with a natural extension to the case $n=\omega$).
In the simplest model there are two nodes and a single one-sided signal transmission, i.e. Alice send a single message to Bob. Furthermore we assume that number of bits of all admissible messages are the same (say $n \in \eN$). In such a trivial setup whole protocol can be described by a set of all admissible messages which Alice may send to Bob, that is the set $T \subseteq \Z_2^n$.

For the transmission Alice send a message $x \in T$ and Bob receive some element $\bar x \in \Z_2^n$, but due to some distortions in the transmission we do not claim $x=\bar x $.
Obviously if $\bar x  \notin T$ then Bob (or code) \emph{detects} an error. In such a setting he tries to recover a message, i.e. assume that the Alice submit the elements which is closest to $\bar x$ (in the Hamming distance; so-called likehood decoding) -- say $y$ (if the are more than one such element Bob fails to recover). In general we say that Bob (or code) \emph{corrects} the original signal if $y=x$.

As in most of protocol we assume that $\bar x$ is close to $x$, it is reasonable to define an $d$-neighbourhood of $x$, that is the set 
\Eq{*}{
B_d(x):=\{y \in T \colon \hd(y,x)\le d\}.
}
In such a setup Bob can detect up to $d\in \N$ errors  in a code word if and only if $B_d(x) \cap T =\{x\}$ for all $x \in T$. Moreover he can correct up to $d \in \N$ errors (i.e. recovered all messages with at most $d$ errors) if and only if 
$B_d(x)\cap B_d(y)=\emptyset$ for all $x,y \in T$ with $x \ne y$.

Motivated by this fact, for a subset $T \subset \Z_2^n$ ($n \in \eN$) having at least two elements, following  
\cite[section~4.5]{Ada91b} we define 
the \emph{minimum distance} (of $T$) by
\Eq{*}{
\HD(T):=\inf\big\{\hd(x,y) \colon x,y \in T,\, x\ne y\big\}.
}

Then, using some elementary geometrical
argumentation containing in \cite{Ada91b} we can establish the following folk result. 
\begin{lem}\label{lem:detcor}
 Let $T \subset \Z_2^n$ ($n \in\eN$). 
Then, for all $k \in \N$,
\begin{enumerate}[a)]
 \item a code $T$ detects $k$ errors if and only if $\HD(T)\ge k+1$; 
 \item a code $T$ corrects $k$ errors if and only if $\HD(T)\ge 2k+1$.
\end{enumerate}
 \end{lem}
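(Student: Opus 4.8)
The plan is to read off both equivalences directly from the definitions of detection and correction recalled just above; the argument is essentially bookkeeping, and the only point needing genuine care is the case $n=\omega$, where $\hd$ may take the value $\omega$. First I would record an auxiliary observation: $\hd$ takes values in $\eomega$, and below any finite bound there lie only finitely many elements of $\eomega$; consequently, if $\HD(T)$ is finite then the infimum defining it is attained, i.e.\ there are distinct $x,y\in T$ with $\hd(x,y)=\HD(T)$. (If $\abs{T}\le 1$ both equivalences hold trivially, with the convention $\inf\emptyset=\omega$, so I assume $\abs{T}\ge 2$ throughout.)

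For part (a) I would argue directly. If $\HD(T)\ge k+1$, then any two distinct codewords are at Hamming distance $\ge k+1>k$, so $B_k(x)\cap T=\{x\}$ for every $x\in T$ and $T$ detects $k$ errors. Conversely, if $\HD(T)\le k$, the auxiliary observation yields distinct $x,y\in T$ with $\hd(x,y)\le k$, so $y\in(B_k(x)\cap T)\setminus\{x\}$ and $T$ does not detect $k$ errors.

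For part (b), the forward direction is the standard triangle-inequality argument, using that $\hd$ is an extended metric on $\Z_2^n$ (as noted in the introduction). Assume $\HD(T)\ge 2k+1$, let $x\in T$ be the transmitted codeword and $\bar x\in\Z_2^n$ the received word with $\hd(x,\bar x)\le k$. For every codeword $y\ne x$ one gets $\hd(y,\bar x)\ge\hd(x,y)-\hd(x,\bar x)\ge(2k+1)-k=k+1>\hd(x,\bar x)$, so $x$ is the unique codeword nearest $\bar x$ and likelihood decoding returns $x$; hence $T$ corrects $k$ errors. For the converse I would build an uncorrectable transmission: supposing $\HD(T)\le 2k$, pick (by the auxiliary observation) distinct $x,y\in T$ with $m:=\hd(x,y)$ finite and $m\le 2k$, and let $\bar x$ arise from $x$ by flipping its entries on exactly $\ceil{m/2}$ of the $m$ coordinates where $x$ and $y$ disagree. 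Then $\hd(\bar x,x)=\ceil{m/2}\le k$ while $\hd(\bar x,y)=\floor{m/2}\le\hd(\bar x,x)$, so if $x$ is sent and $\bar x$ received — a transmission corrupted in at most $k$ places — then $x$ is not the unique codeword nearest $\bar x$ ($y$ is strictly closer when $m$ is odd, equidistant when $m$ is even), and likelihood decoding does not return $x$; hence $T$ fails to correct $k$ errors.

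I do not expect a real obstacle — this is a folk statement whose content lies entirely in unwinding the definitions. If anything is delicate it is the passage to a distance-minimising pair of codewords when $n=\omega$, which is why I would isolate the auxiliary observation at the outset; the remaining ingredients — the triangle inequality for $\hd$, already asserted in the introduction, and the elementary arithmetic with $\ceil{m/2}$ and $\floor{m/2}$ — are entirely routine.
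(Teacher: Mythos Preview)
Your proof is correct. The paper itself does not supply a proof of this lemma: it labels the statement a ``folk result'' and refers the reader to the elementary geometrical argumentation in Ad\'amek's book \cite{Ada91b}. What you have written is precisely that standard argument---unwinding the detection and correction conditions via the triangle inequality and, for the converse in (b), constructing a received word $\bar x$ halfway between two nearby codewords---so there is nothing to compare beyond noting that your write-up makes explicit the one point the cited textbook treatment (which works in $\Z_2^n$ for finite $n$) leaves implicit, namely that the infimum defining $\HD(T)$ is attained whenever it is finite, since $\hd$ takes values in the well-ordered set $\eomega$.
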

 
Let us emphasize that in the case $n=\omega$ it could happen that $\HD(T)=+\infty$. In this setting the latter lemma states that code detect and corrects any (finite) numbers of errors. Furthermore it shows that the problem of detection, correction, and the minimum distance are in some sense equivalent to each other.

\subsection*{Thin sets} 
A subset $T$ of the Cantor cube $\Z_2^\omega$ is called \emph{thin} if for every number $n \in \omega$ the restriction $\pr_n|T$ of the projection $\pr_n \colon \Z_2^\omega \to 2^{\omega \setminus \{n\}}$ given by $\pr_n \colon  x \mapsto x\vert_{ \omega \setminus \{n\}}$ is injective. Equivalently the minimal distance of $T$ equals at least two -- see Lemma~\ref{lem:ThinEq} for the precise wording of this statement. In view of Lemma~\ref{lem:detcor}, this is a necessary and sufficient condition for a family of infinite streams of bits which allows to detect a single error. In this sense thin sets are the infinite counterpart of a parity bit.

Some properties of this family has been already given in \cite{BanGlaJabSwa1803}. In particular it is known \cite[Proposition~9.3]{BanGlaJabSwa1803} that each Borel thin subset of the Cantor cube is meager and has Haar measure zero. We deliver some further properties of thin sets. In turns out that they are deeply connected with Banach-Mazur games. The key results are obtained using the folk ``capture-the-strategy'' idea.

We also study some special subtype of this family, so-called xor-sets introduced by Niwi\'nski--Kopczy\'nski \cite{NiwKop14}. This allows us to prove that Cantor cube can be partitioned into two thin sets (see section~\ref{sec:xor} for details).

\section{Auxiliary results}

\subsection{Few properties of thin sets} 
First, as a straightforward implication of the definition, we can prove that a subset of a thin set is also thin. Furthermore this family is closed under union of chains. These properties follows from analogous asserts of injective mappings (understood as sets of pairs with suitable assumptions).


\begin{lem}\label{lem:ThinEq}
Let $T \subset \Z_2^\omega$. The following statements are equivalent:
\begin{enumerate}[(i)]
 \item $T$ is thin;
 \item every class of $T /_\sim$ is thin;
 \item $\HD(T) \ge 2$.
 \end{enumerate}
 \end{lem}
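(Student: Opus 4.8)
The plan is to prove the cycle of implications $(i)\Rightarrow(iii)\Rightarrow(ii)\Rightarrow(i)$, unpacking each definition and translating it into a statement about Hamming distance. The core observation is that the projection $\pr_n|T$ fails to be injective precisely when there exist distinct $x,y\in T$ that agree on all coordinates except possibly the $n$-th, i.e. with $\hd(x,y)\le 1$; since $x\ne y$ forces $\hd(x,y)\ge 1$, this means $\hd(x,y)=1$.

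First I would establish $(i)\Rightarrow(iii)$. Assume $T$ is thin. If $\HD(T)<2$, then since the infimum is taken over distinct pairs and $\hd$ is integer-valued, there are $x,y\in T$ with $x\ne y$ and $\hd(x,y)=1$; let $n$ be the unique coordinate where they differ. Then $x\vert_{\omega\setminus\{n\}}=y\vert_{\omega\setminus\{n\}}$, so $\pr_n|T$ is not injective, contradicting thinness. Hence $\HD(T)\ge 2$.

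Next, $(iii)\Rightarrow(ii)$: if $\HD(T)\ge 2$, then for any class $C\in T/_\sim$ we have $C\subseteq T$, and the infimum defining $\HD(C)$ is over a subset of the pairs used for $\HD(T)$, so $\HD(C)\ge\HD(T)\ge 2$. It remains only to note that $\HD(C)\ge 2$ for a set $C$ implies $C$ is thin — this is exactly the implication $(iii)\Rightarrow(i)$, which I would prove as a standalone step (stated for an arbitrary subset of $\Z_2^\omega$) and then apply to both $C$ and, to close the loop, to $T$ itself via $(ii)\Rightarrow(i)$ (taking, say, that every class being thin forces $T$ thin: if $\pr_n|T$ identified $x\ne y$ then $\hd(x,y)\le 1<\infty$ so $x\sim y$ lie in a common class on which $\pr_n$ is then non-injective). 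For the standalone $(iii)\Rightarrow(i)$: given $\HD(T)\ge 2$ and any $n$, if $\pr_n(x)=\pr_n(y)$ with $x,y\in T$, $x\ne y$, then $x$ and $y$ differ only at coordinate $n$, so $\hd(x,y)=1<2\le\HD(T)$, contradiction; thus $\pr_n|T$ is injective.

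I do not expect a serious obstacle here — the statement is essentially a reformulation exercise. The only point requiring mild care is the handling of infima and the integrality of the Hamming distance: one must observe that $\HD(T)\ge 2$ is genuinely equivalent to ``no two distinct points of $T$ are at distance exactly $1$'', which uses that $\hd$ takes values in $\eomega$ and that $\hd(x,y)\ge 1$ whenever $x\ne y$. A secondary subtlety is keeping the quantifier over classes straight in $(ii)$: part $(ii)$ should be read as ``every $\sim$-equivalence class of $T$, viewed as a subset of $\Z_2^\omega$, is a thin set'', and the passage $(ii)\Rightarrow(i)$ works because any witness to non-thinness of $T$ automatically lives inside a single class.
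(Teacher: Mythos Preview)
Your proof is correct and rests on the same two observations the paper uses: that $\pr_n|T$ fails to be injective exactly when two distinct points of $T$ have Hamming distance $1$, and that any such pair lies in a single $\sim$-class. The only cosmetic difference is the direction of the cycle---you run $(i)\Rightarrow(iii)\Rightarrow(ii)\Rightarrow(i)$ whereas the paper runs $(i)\Rightarrow(ii)\Rightarrow(iii)\Rightarrow(i)$.
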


 \begin{proof}
Implication $(i) \Rightarrow (ii)$ is obvious as every subset of a thin set is thin. 

To prove $(ii) \Rightarrow (iii)$ assume to the contrary that $\hd(x,y)=1$ for some $x,y \in T$. Then $x\sim y$ and $\{n\in \omega \colon x(n)\ne y(n)\}=\{n_0\}$ for some $n_0 \in \omega$. Thus we have $\pr_{n_0}(x)=\pr_{n_0}(y)$. By the definition of thin set this implies $x=y$, and consequently $\hd(x,y)=0$ contradicting the assumption.

  To show $(iii) \Rightarrow (i)$ assume that $T$ is not a thin set. Then there exists $n_0 \in \omega$ and two distinct elements $x,y \in T$ such that $\pr_{n_0}(x)=\pr_{n_0}(y)$.  Then $\{n \in \omega \colon x(n)\ne y(n)\}=\{n_0\}$, i.e. $\hd(x,y)=1$.
  \end{proof}


By the above results and Zorn Lemma we obtain next proposition.
\begin{prop}\label{prop:3}
 Let $T \subset \Z_2^\omega$ be a thin set. Then there exists a maximal thin set $T_0 \subset \Z_2^\omega$ such that $T \subseteq T_0$. Moreover for all $Q \in \Z_2^\omega/_\sim$ we have that $T_0 \cap Q$ is a maximal thin subset of $Q$.
\end{prop}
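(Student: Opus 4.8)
The plan is to prove both parts by a standard Zorn's Lemma argument, relying on the two structural facts noted just before the statement: that any subset of a thin set is thin, and that the family of thin sets is closed under unions of chains. For the first assertion, consider the family $\mathcal{T}:=\{S \subseteq \Z_2^\omega \colon S \text{ is thin and } T \subseteq S\}$, partially ordered by inclusion. It is nonempty since $T \in \mathcal{T}$. Given a chain $\mathcal{C} \subseteq \mathcal{T}$, its union $\bigcup \mathcal{C}$ contains $T$ and is thin by the closure-under-chains property, hence is an upper bound for $\mathcal{C}$ lying in $\mathcal{T}$. By Zorn's Lemma $\mathcal{T}$ has a maximal element $T_0$; it is a thin set containing $T$, and it is maximal among all thin subsets of $\Z_2^\omega$ because any thin set properly containing $T_0$ would also contain $T$ and thus belong to $\mathcal{T}$, contradicting maximality of $T_0$ in $\mathcal{T}$.

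For the ``moreover'' part, fix $Q \in \Z_2^\omega/_\sim$. First, $T_0 \cap Q$ is thin, being a subset of the thin set $T_0$ (equivalently, one may invoke Lemma~\ref{lem:ThinEq}, since condition (ii) for $T_0$ says exactly that each class $T_0 /_\sim$ is thin, and $T_0 \cap Q$ is precisely the class of $T_0$ sitting inside $Q$ when nonempty). It remains to show $T_0 \cap Q$ is \emph{maximal} as a thin subset of $Q$. Suppose $R \subseteq Q$ is thin with $T_0 \cap Q \subseteq R$. The natural candidate for a larger thin set is $T_0':=(T_0 \setminus Q) \cup R$, and the key point is that this set is still thin. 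To see this, use the characterization $\HD(T_0') \ge 2$ from Lemma~\ref{lem:ThinEq}: if $x,y \in T_0'$ with $\hd(x,y)=1$, then in particular $\hd(x,y)<\infty$, so $x \sim y$ and $x,y$ lie in the same equivalence class; hence either both lie in $Q$ — so both lie in $R$, contradicting thinness of $R$ — or both lie in $T_0 \setminus Q$, contradicting thinness of $T_0$. (The case of one point in each piece cannot occur, since points of $R \subseteq Q$ are never $\sim$-equivalent to points of $T_0 \setminus Q$.) Thus $\HD(T_0') \ge 2$, so $T_0'$ is thin; since $T_0 \subseteq T_0'$ and $T_0$ is a maximal thin set, $T_0' = T_0$, which forces $R = T_0 \cap Q$. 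Hence $T_0 \cap Q$ is a maximal thin subset of $Q$.

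The only genuinely substantive step is the verification that the ``swap'' $T_0' = (T_0 \setminus Q) \cup R$ remains thin; everything else is routine bookkeeping with Zorn's Lemma. The crucial observation making that step work is that a single-bit disagreement forces $\sim$-equivalence, so no ``mixed'' pair straddling $Q$ and its complement can ever realize distance $1$ — this localizes the thinness condition to within individual equivalence classes, which is exactly the content of the equivalence $(i)\Leftrightarrow(ii)$ in Lemma~\ref{lem:ThinEq} and is what lets the maximality pass through to each class separately.
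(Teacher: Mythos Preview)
Your proof is correct and follows essentially the same approach as the paper: Zorn's Lemma for the first part, and for the ``moreover'' part the observation that thinness is local to $\sim$-classes, so an extension of $T_0\cap Q$ within $Q$ yields a global extension of $T_0$, contradicting maximality. The paper phrases the second step slightly more tersely --- it adds a single point $q\in Q\setminus T_0$ with $(T_0\cap Q)\cup\{q\}$ thin and invokes the implication $(ii)\Rightarrow(i)$ of Lemma~\ref{lem:ThinEq} directly, rather than swapping in a general $R$ and checking $\HD\ge 2$ by hand --- but the content is the same.
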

\begin{proof}
As thin sets are closed under union of chains, the first part is an immediate implication of Zorn lemma. To show the moreover part assume that there exists $Q \in \Z_2^\omega/_\sim$ such that $T_0\cap Q$ is not a maximal thin subset of $Q$. Then there exists $q \in Q\setminus T_0$ such that
$(T_0\cap Q) \cup \{q\}$ is thin. Therefore applying the implication $(ii)\Rightarrow(i)$ in Lemma~\ref{lem:ThinEq} we obtain that $T_0\cup\{q\}$ is also thin contradicting the maximality.
\end{proof}

\subsection{Banach-Mazur game}
Following Berwanger-Gr\"adel-Kreutzer \cite{BerGraKre03} 
consider a special type of Banach-Mazur game parameterized by a set $F \subset \Z_2^\omega$ (with the product, i.e. Tychonoff topology). Let $\mathcal{G}(F)$ be an infinite two-player game with a complete information, where moves of players consist of selecting and extending finite path through a complete binary tree $\Z_2^\omega$ by an element in $\Z_2^+:=\bigcup_{n=1}^\infty \Z_2^n$. The players will be called Ego and Alter. The two players alternate turns, and each player is aware of all moves before making the next one; Ego begins. All plays are infinite and the result outcome of each play is an element of $x \in \Z_2^\omega$. Ego wins if $x \in F$, otherwise Alter wins. For detailed history of this games we refer the reader to Oxtoby \cite{Oxt57} and Telg\'{a}rsky \cite{Tel87}.

Using some unraveling techniques it is possible to embed $\mathcal{G}(F)$ to the classical Banach-Mazur game on a tree $\Z_2^\omega$ (see \cite{BerGraKre03} for details).
Thus we can reformulate the original Banach-Mazur theorem \cite{Ban32} in the flavour of Berwanger-Gr\"adel-Kreutzer. Prior to this we need to recall the notion of strategy.

Ego's and Alter's strategy are the functions 
\Eq{*}{
e \colon \bigcup_{n=0}^\infty (\Z_2^+)^n\to \Z_2^+\qquad\text{ and  }\qquad a \colon \bigcup_{n=1}^\infty (\Z_2^+)^n\to \Z_2^+,
}
respectively. Denote sets of all Alter's and Ego's strategies by $\calA$ and $\calE$. For $a \in \calA$ and $e \in \calE$ one can consider a sequence of moves 
\begin{align*}
 \epsilon_0&:=e(\emptyset), \\
 \alpha_i&:=a(\epsilon_0,\dots,\epsilon_{i-1}), &\qquad i \in \N_+ \\
 \epsilon_i&:=e(\alpha_1,\dots,\alpha_i), &\qquad i \in \N_+ 
\end{align*}
and define a \emph{play} (hereafter we use the classical abbreviation to concatenation of sequences)
\Eq{*}{
\Play\colon \calE\times \calA \ni (e,a) \mapsto (\epsilon_0\alpha_1\epsilon_1\alpha_2\epsilon_2\alpha_3\dots)\in \Z_2^\omega.
}
This is very usual notion in game theory -- instead of sequence of moves players show whole strategy at the beginning. We also use the time-lapse approach to a strategy. We treat it as a sequence of replies for the opponent's moves and write it in terms of pseudocode -- it is a classical approach in game theory which is equivalent to the one above.

We say that $a_0 \in \calA$ is \emph{an Alter's winning strategy} (in $\mathcal{G}(F)$) if $\Play(e,a_0) \notin F$ for all $e \in \calE$. Analogously $e_0 \in \calE$ is \emph{an Ego's winning strategy} (in $\mathcal{G}(F)$) if $\Play(a,e_0) \in F$ for all $a \in \calA$. If one of players has a winning strategy then the game $\mathcal{G}(F)$ is \emph{determined}. Now we can recall celebrated Banach-Mazur theorem.

\begin{thm}[Banach-Mazur]Let $F \subset \Z_2^\omega$.
\begin{enumerate}
 \item  Alter has a winning strategy for the game $\mathcal{G}(F)$ if and only if $F$ is meager.
\item  Ego has a winning strategy for the game $\mathcal{G}(F)$ if and only if there exists finite word $x \in \Z_2^+$ such that $(x\cdot \Z_2^\omega)\setminus F$ is meager.
 \end{enumerate}
\end{thm}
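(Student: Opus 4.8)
The plan is to derive both statements from the classical Banach--Mazur theorem \cite{Ban32} (see also Oxtoby \cite{Oxt57}), applied through the embedding of $\mathcal{G}(F)$ into the Banach--Mazur game on the tree $\Z_2^\omega$ mentioned above. I would use that theorem in the following tree form: in the game on $\Z_2^\omega$ with payoff set $A \subseteq \Z_2^\omega$, players~I and~II alternately choose basic clopen sets $s_0\cdot\Z_2^\omega \supseteq s_1\cdot\Z_2^\omega \supseteq \cdots$ (player~I contributing $s_0,s_2,\dots$ and player~II contributing $s_1,s_3,\dots$, player~I first), and player~II wins the run exactly when $\bigcap_n s_n\cdot\Z_2^\omega \subseteq A$; then player~II has a winning strategy iff $A$ is comeager, and player~I has a winning strategy iff $A$ is meager in some nonempty open set, i.e. iff $(x\cdot\Z_2^\omega)\cap A$ is meager for some $x \in \Z_2^+$.

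First I would spell out the correspondence of the two games. Given $(e,a)\in\calE\times\calA$, let $s_0,s_1,s_2,\dots$ enumerate the successive partial plays $\epsilon_0,\ \epsilon_0\alpha_1,\ \epsilon_0\alpha_1\epsilon_1,\ \epsilon_0\alpha_1\epsilon_1\alpha_2,\ \dots$ produced in $\mathcal{G}(F)$; then $s_0\cdot\Z_2^\omega \supseteq s_1\cdot\Z_2^\omega \supseteq \cdots$ is a run of the tree game in which Ego acts as player~I and Alter as player~II, and — since every move of $\mathcal{G}(F)$ appends at least one bit — the lengths of the $s_n$ strictly increase, so $\bigcap_n s_n\cdot\Z_2^\omega = \{\Play(e,a)\}$. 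Conversely, because the sets $x\cdot\Z_2^\omega$ with $x\in\Z_2^+$ form a clopen neighbourhood basis of $\Z_2^\omega$, any move in the (open-set) Banach--Mazur game can be shrunk to such a basic clopen set, so the open-set game, the tree game and $\mathcal{G}(F)$ all have the same player winning; this is precisely the unraveling recorded in \cite{BerGraKre03}. Since $\{\Play(e,a)\}\subseteq A$ iff $\Play(e,a)\in A$, matching the winning conditions yields: Ego (resp. Alter) has a winning strategy in $\mathcal{G}(F)$ iff player~I (resp. player~II) has a winning strategy in the tree game with payoff set $A:=\Z_2^\omega\setminus F$ — indeed a run won by player~I is exactly one whose outcome lies in $A^{c}=F$, which is Ego's winning condition.

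The remainder is bookkeeping. By the classical theorem, Alter has a winning strategy in $\mathcal{G}(F)$ iff $\Z_2^\omega\setminus F$ is comeager, i.e. iff $F$ is meager; this is~(1). Likewise Ego has a winning strategy iff $\Z_2^\omega\setminus F$ is meager in some nonempty open set, i.e. iff $(x\cdot\Z_2^\omega)\cap(\Z_2^\omega\setminus F) = (x\cdot\Z_2^\omega)\setminus F$ is meager for some $x\in\Z_2^+$; this is~(2). Requiring $x$ to be a nonempty word (rather than allowing an arbitrary nonempty open set, possibly all of $\Z_2^\omega$) costs nothing, since every nonempty open subset of $\Z_2^\omega$ contains a basic clopen set $x\cdot\Z_2^\omega$ with $x\in\Z_2^+$, and a subset of a meager set is meager.

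I expect the only genuine subtlety to be the equivalence of the game formats — finite-word moves as in $\mathcal{G}(F)$, single-bit moves, and arbitrary nonempty open sets — that is, checking that translating a winning strategy from one format to another preserves the winner. This is a routine simulation (shrink each open move to a long enough basic clopen set and feed the simulated partial play to the borrowed strategy), and it is exactly the embedding of \cite{BerGraKre03}; once it is invoked, the argument reduces to the citation together with the translation of ``comeager'' and ``meager in a nonempty open set'' carried out above. Alternatively one can avoid the open-set format altogether and prove the tree version of the Banach--Mazur theorem directly by the usual pruned-scheme argument, which for the Cantor cube is short.
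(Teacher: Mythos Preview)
Your proposal is correct and matches the paper's treatment: the paper does not prove this theorem at all but merely states it as the classical Banach--Mazur theorem \cite{Ban32}, reformulated via the unraveling/embedding of $\mathcal{G}(F)$ into the standard Banach--Mazur game on $\Z_2^\omega$ from \cite{BerGraKre03}. Your write-up simply fills in the details of that reduction (the correspondence of runs and the translation of ``comeager''/``meager in a basic open set'') that the paper leaves to the citations, so there is nothing to compare beyond noting that you have expanded what the paper compresses into two sentences of references.
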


As a result we have the following corollary. 
\begin{cor}\label{cor:detBor}
Games $\mathcal{G}(F)$ are determined for all Borel sets $F$. 
\end{cor}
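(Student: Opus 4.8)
The goal is to prove that every Banach--Mazur game $\mathcal{G}(F)$ is determined when $F$ is Borel. The natural plan is to invoke the characterisations provided by the Banach--Mazur theorem just stated. Recall that Alter has a winning strategy exactly when $F$ is meager, and Ego has a winning strategy exactly when $(x\cdot\Z_2^\omega)\setminus F$ is meager for some finite word $x\in\Z_2^+$. So to establish determinacy it suffices to show the following dichotomy: either $F$ is meager, or there is a basic clopen set $x\cdot\Z_2^\omega$ in which $F$ is comeager (equivalently its complement is meager there).

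The key tool is the Baire property: every Borel subset of $\Z_2^\omega$ has the Baire property, i.e. differs from some open set by a meager set. Given a Borel $F$, write $F=U\triangle M$ with $U$ open and $M$ meager. If $U=\emptyset$ then $F\subseteq M$ is meager, and Alter wins by case~(1). Otherwise $U$ is a nonempty open subset of the Cantor cube, so it contains some basic clopen cylinder $x\cdot\Z_2^\omega$ with $x\in\Z_2^+$. Then within this cylinder, $(x\cdot\Z_2^\omega)\setminus F \subseteq (x\cdot\Z_2^\omega)\setminus U \cup M \subseteq M$ (using $x\cdot\Z_2^\omega\subseteq U$), hence $(x\cdot\Z_2^\omega)\setminus F$ is meager, and Ego wins by case~(2). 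In either case the game is determined.

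I would structure the write-up in exactly these two short steps: first state that $F$ has the Baire property (citing the classical fact, as the Borel $\sigma$-algebra is contained in the $\sigma$-algebra of sets with the Baire property), then split on whether the associated open set is empty. The only mild subtlety is making sure the set difference bookkeeping is right: from $F=U\triangle M$ one gets $U\setminus F\subseteq M$ and $F\setminus U\subseteq M$, and it is the first of these, restricted to the cylinder inside $U$, that yields meagerness of the complement of $F$ there. There is no real obstacle here; the corollary is essentially immediate once one has the Baire property for Borel sets together with the Banach--Mazur characterisations, and the proof is only a few lines.
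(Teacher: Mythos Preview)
Your argument is correct and is exactly the standard way to read the paper's ``As a result'': the paper gives no proof beyond that phrase, and deriving determinacy from the two Banach--Mazur characterisations via the Baire property of Borel sets is precisely the intended one-line justification. Your set-difference bookkeeping is fine (from $F\triangle U=M$ one gets $U\setminus F\subseteq M$, hence $(x\cdot\Z_2^\omega)\setminus F\subseteq M$ once $x\cdot\Z_2^\omega\subseteq U$), so there is nothing to add.
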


In the following two propositions we present necessary conditions for Ego and Alter to have a winning strategy. The first result essentially follows the idea of Niwi\'nski and Kopczy\'nski from \cite{NiwKop14}.

\begin{prop}\label{prop:NessPro}
Let $F \subseteq \Z_2^\omega$ be a thin set. Then Ego has no winning strategy in a game $\mathcal{G}(F)$.
\end{prop}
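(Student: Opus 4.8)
The plan is to show that any purported Ego winning strategy $e_0$ can be defeated by Alter, by exploiting the fact that a thin set cannot contain two points at Hamming distance $1$. The intuition is the classical Banach--Mazur ``capture-the-strategy'' argument: Alter watches Ego commit to a strategy, and Alter arranges to play against $e_0$ in two parallel but slightly different ways so that the two resulting plays $x,y$ land in $F$ but differ in exactly one coordinate, contradicting thinness via Lemma~\ref{lem:ThinEq}.

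More concretely, I would proceed as follows. First, fix an arbitrary Ego strategy $e_0 \in \calE$ and let $\epsilon_0 := e_0(\emptyset)$ be Ego's opening move, a finite word of some length $\ell_0 \ge 1$. The key observation is that Alter, on move $k$, may inspect the whole finite history of play so far and hence can ``simulate'' what Ego would do under a counterfactual history. I would construct two Alter responses $a$ and $a'$ against $e_0$ whose induced plays agree on a long common prefix, then diverge at a single designated bit position $m$, and thereafter are steered back to agree again for the rest of the play. The position $m$ would be chosen inside one of Ego's blocks (or at the junction), so that after flipping that one bit Alter can still continue supplying finite words that complete the sequence; since Ego's future moves depend only on Alter's moves (not on the already-fixed bits), Alter can compensate on its own turns to force the two tails to coincide. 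This yields $x = \Play(e_0,a)$ and $y = \Play(e_0,a')$ with $x,y \in F$ (because $e_0$ is winning) and $\hd(x,y) = 1$.

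Having produced such $x$ and $y$, the contradiction is immediate: by Lemma~\ref{lem:ThinEq}, a thin set satisfies $\HD(F) \ge 2$, so it cannot contain two distinct points at Hamming distance $1$; but $\hd(x,y)=1$ forces $x \ne y$ while both lie in $F$. Hence no $e_0 \in \calE$ can be winning, which is exactly the claim.

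The main obstacle, and the part requiring care, is the bookkeeping that makes the two plays differ in \emph{exactly} one coordinate rather than finitely many. Flipping a single bit is delicate because Ego's blocks have variable, strategy-dependent lengths, so the ``aligned'' coordinate $m$ in the two simulations must be pinned down before Alter's divergent move and then all subsequent Alter moves in the two runs must be chosen to re-synchronize the bit-streams despite Ego possibly reacting differently. One clean way to handle this: have Alter, in the second run, play a word that is identical to its first-run word except that it absorbs the one-bit discrepancy, so that from Ego's perspective the histories in the two runs become literally identical after that point, and thus Ego plays identically thereafter, making the tails coincide automatically. Getting the indexing of the concatenated sequence right so that precisely one position is affected is the crux; the rest is routine.
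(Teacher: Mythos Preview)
Your overall plan coincides with the paper's: assume Ego has a winning strategy, run two plays of $\mathcal{G}(F)$ against it so that the outputs differ in exactly one coordinate, and invoke Lemma~\ref{lem:ThinEq}. That part is fine and is exactly what the paper does.

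The gap is in your synchronisation step. You propose that Alter, in the second run, ``absorbs the one-bit discrepancy'' so that ``from Ego's perspective the histories in the two runs become literally identical after that point, and thus Ego plays identically thereafter.'' This cannot happen as stated: Ego's $i$-th move is $e_0(\alpha_1,\dots,\alpha_i)$, a function of the \emph{entire tuple} of Alter's past moves, not of the concatenated bit-stream. Since the very first Alter moves in the two runs must differ (that is where the single flipped bit lives), the two Alter-histories are different tuples forever, and $e_0$ is entitled to react differently at every subsequent stage. No amount of ``absorbing'' inside later Alter words will erase that difference from Ego's point of view.

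The paper resolves this with a genuine cross-feeding trick that your sketch does not reach. Alter does \emph{not} try to make the two histories look the same to Ego; instead, Alter uses Ego's replies from the \emph{other} play as its own moves. Concretely: in the initial play Alter opens with $0$, in the mirror play with $1\alpha_1$ (where $\alpha_1$ is Ego's reply to $0$ in the initial play); thereafter, whenever Ego replies $\alpha_k$ in the initial play Alter copies $\alpha_k$ as its next move in the mirror play, and whenever Ego replies $\beta_k$ in the mirror play Alter copies $\beta_k$ back as its next move in the initial play. The two concatenated outputs are then
\[
(\alpha_0\,0\,\alpha_1\,\beta_1\,\alpha_2\,\beta_2\cdots)\quad\text{and}\quad(\alpha_0\,1\,\alpha_1\,\beta_1\,\alpha_2\,\beta_2\cdots),
\]
which differ in precisely one bit by construction, with no need for Ego to ever ``play identically'' in the two runs. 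This interlocking is the missing idea; once you have it, the bookkeeping is trivial rather than delicate.
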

\begin{proof}
Assume to the contrary that Ego has a winning strategy in $\mathcal{G}(F)$. We play this game two times simultaneously -- we call them ``initial'' and ``mirror'' play.
Denote the Ego's moves in the initial and mirror plays as $(\alpha_i)_{i=0}^\infty$ and $(\beta_i)_{i=0}^\infty$, respectively. 
Obviously $\alpha_0=\beta_0$ as the first Ego's move is fixed.

First Alter's reply in the initial play is $0$. In the mirror play it is $(1\alpha_1)$. From now on Alter capture the Ego's strategy in the following way:
\begin{itemize}
 \item each time Ego moves $\alpha_k$ ($k \ge 2$) in the initial play, Alter copy this move to the mirror play;
 \item each time Ego moves $\beta_k$ ($k \ge 1$) is the mirror play, Alter copy this move to the initial play.
\end{itemize}

These two plays can be illustrated in the table-like form (Simulation~\ref{sim:xor})

\begin{center}
\begin{figure}[!h]
 \begin{tabular}{l|cccccccccccc} 
\hline
    \multicolumn{13}{|c|}{Initial play} \\ 
\hline
      Ego &  $\alpha_0$ & & $\alpha_1$ & & $\alpha_2$ &  &$\dots$ & $\alpha_k$ & &$\dots$&\\
      Alter & & $0$ & & $\beta_1$ & & $\beta_2$& $\dots$ & &  $\beta_k$ & $\dots$ & &\\
\hline
\hline
    \multicolumn{13}{|c|}{Mirror play} \\ 
      \hline
      Ego &$\alpha_0$  &\multicolumn{2}{c}{} & $\beta_1$ & & $\beta_2$ & $\dots$ & &$\beta_k$ &$\dots$\\
      Alter & & \multicolumn{2}{c}{$1\alpha_1$}& & $\alpha_2$ & & $\dots$ &  $\alpha_k$ & & $\dots$ \\
    \end{tabular} 
\caption{\label{sim:xor}Capturing Ego's strategy}
\end{figure}
\end{center}

The final outcome of the initial and mirror plays are
$$a:=(\alpha_00\alpha_1\beta_1\alpha_2\beta_2\cdots) \text{ and } b:=(\alpha_01\alpha_1\beta_1\alpha_2\beta_2\cdots),$$
respectively. As Ego has a winning strategy in $\mathcal{G}(F)$ then we obtain $a,b \in F$. However in this case we have
$\hd(a,b)=1$. This implies that $F$ is not a thin set, contradicting the assumptions.
 \end{proof}

 Krom \cite{Kro74}  proved that Ego has no winning strategy in $\mathcal{G}(F)$ if and only if $F \subset \Z_2^\omega$ is the Baire space. Thus applying Proposition~\ref{prop:NessPro} we immediately obtain that thin sets are Baire spaces.

Now we are heading toward the necessary condition to Alter's winning strategy, however we need to introduce few notions first. For $k \in \omega$ define the function $\bit_k \colon \omega \to \{0,1\}$ such that $\bit_k(x)$ is the $k$-th bit from the right in the binary notation of $x$ (counting from zero). More precisely, for $k, n \in \omega$ we have
$$
\bit_k(n) = \begin{cases}
  0 & \text{ if } (n \bmod 2^{k+1}) \in \{0,\dots,2^k-1 \}, \\
  1 & \text{ if } (n \bmod 2^{k+1}) \in \{2^{k},\dots,2^{k+1}-1\}.           
            \end{cases}
$$

For $n \in \N$, $x\in \Z_2^n$ and $m \in \{0,1,\dots,2^n-1\}$ define $\Theta(x,m)\in \Z_2^n$ as follows ($\oplus$ stands for a binary xor)
$$\Theta(x,m):=(x_k \oplus \bit_k(m))_{k\in\{0,\dots,n-1\}}.$$

For an infinite sequence $x \in \Z_2^\omega$ and $m \in \omega$ define $\Theta(x,m)\in \Z_2^\omega$ by   
$$\Theta(x,m):=(x_k \oplus \bit_k(m))_{k\in\omega}.$$ 

We can now proceed to formulate and proof the most technical proposition of this paper.
 
\begin{prop}\label{prop:NecVer}
Let $F \subseteq \Z_2^\omega$ be a set such that Alter has a winning strategy in a game $\mathcal{G}(F)$. Then there exists an element $X \in \Z_2^\omega /_\sim$ such that $X\cap F=\emptyset$.
\end{prop}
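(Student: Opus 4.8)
The plan is to route the hypothesis through the Banach--Mazur theorem to learn that $F$ is meager, and then to exploit the fact that $\sim$ is exactly the orbit equivalence of the translation action of a \emph{countable} group of homeomorphisms of $\Z_2^\omega$, so that a routine Baire category count produces an entire $\sim$-class avoiding $F$. First I would record the translation: for every $x\in\Z_2^\omega$ one has $[x]_\sim=\{\Theta(x,m):m\in\omega\}$. Indeed, $y\sim x$ means that $E=\{k:x_k\neq y_k\}$ is finite, and then $y=\Theta(x,m)$ for $m=\sum_{k\in E}2^k$ (since $\bit_k(m)=1$ exactly for $k\in E$), while conversely each $\Theta(x,m)$ flips only finitely many coordinates of $x$, hence lies in $[x]_\sim$. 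Thus the $\sim$-classes are precisely the "level sets" of the indexing by $\Theta$.

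The argument then proceeds in four short steps. (1) Since Alter has a winning strategy in $\mathcal{G}(F)$, the first part of the Banach--Mazur theorem gives that $F$ is meager in $\Z_2^\omega$. (2) For each fixed $m\in\omega$ the map $\vartheta_m\colon x\mapsto\Theta(x,m)$ is a homeomorphism of $\Z_2^\omega$: every coordinate of $\vartheta_m(x)$ equals $x_k$ or $1-x_k$, so $\vartheta_m$ is continuous, and $\vartheta_m\circ\vartheta_m=\mathrm{id}$; consequently $\vartheta_m(F)$ is again meager. (3) Because $\omega$ is countable, $\widetilde F:=\bigcup_{m\in\omega}\vartheta_m(F)$ is meager, so by the Baire category theorem (the Cantor cube being a nonempty compact metric space) there is a point $y\in\Z_2^\omega\setminus\widetilde F$. (4) For this $y$ and every $m\in\omega$ we have $\Theta(y,m)\notin F$: otherwise $y=\vartheta_m(\vartheta_m(y))=\vartheta_m(\Theta(y,m))\in\vartheta_m(F)\subseteq\widetilde F$, a contradiction. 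Hence $X:=[y]_\sim=\{\Theta(y,m):m\in\omega\}$ satisfies $X\cap F=\emptyset$, which is the assertion.

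Along this route I do not expect a serious obstacle; the whole content sits in Steps (3)--(4), namely the remark that $\sim$-classes are orbits of a countable group of homeomorphisms under which meagerness is invariant, plus the bookkeeping that identifies this with the $\Theta$-notation ($\vartheta_m$ is an involution, and the sets $\vartheta_m(F)$ exhaust all finitely-supported translates of $F$). That bookkeeping is the only place calling for care, but it is routine.

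If one instead wishes to keep the purely game-theoretic style of Proposition~\ref{prop:NessPro} and avoid invoking Banach--Mazur, the plan would be: fix an auxiliary Ego strategy, let $y$ be the outcome of the resulting play against the given Alter winning strategy $a_0$ (so $y\notin F$ since $a_0$ wins), and then for each $m$ run a parallel play in which Ego \emph{captures} $a_0$ so that the outcome is forced to equal $\Theta(y,m)$, again automatically outside $F$ — this is precisely where the $\bit_k$/$\Theta$ apparatus set up just before the statement would be used. In that version the hard part is coordinating these infinitely many parallel plays so that a single base point $y$ serves simultaneously for all error patterns $m$, and I would expect most of the technical effort to go there; the category argument above sidesteps this difficulty entirely.
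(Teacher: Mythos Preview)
Your category argument is correct and complete: steps (1)--(4) are all sound, and the identification $[x]_\sim=\{\Theta(x,m):m\in\omega\}$ together with the fact that each $\vartheta_m$ is an involutive homeomorphism is exactly the bookkeeping needed.

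However, the paper does \emph{not} proceed this way. It takes precisely the ``purely game-theoretic'' route you sketch in your final paragraph: it fixes Alter's winning strategy and has Ego run countably many plays in parallel (Play~$0$, Play~$1$, \dots), carefully interleaving moves so that Ego's starting move in Play~$i$ is $\Theta$ of the concatenation of earlier Alter replies, and thereafter Ego simply copies Alter replies across plays. The scheduling is arranged so that the outcome $r_i$ of Play~$i$ satisfies $r_i=\Theta(r_0,i)$ for every $i$; since Alter wins each play, all $r_i\notin F$, whence $[r_0]_\sim\cap F=\emptyset$. So the ``hard part'' you anticipated---coordinating infinitely many parallel plays so that a single base point serves for all error patterns---is exactly what the paper's proof spends its effort on, via an explicit algorithmic description of Ego's \textsc{Capture} procedure.

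As to what each approach buys: your argument is considerably shorter and conceptually cleaner, but it consumes the Banach--Mazur theorem as a black box and is nonconstructive (the point $y$ comes from Baire category). The paper's proof is self-contained at the level of strategies and actually exhibits the avoided $\sim$-class as the set of outcomes of explicitly described plays; this keeps the argument in the same ``capturing'' style as Proposition~\ref{prop:NessPro} and explains why the $\bit_k/\Theta$ machinery was introduced. Either proof is fully adequate for the stated proposition.
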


\begin{proof}

Fix an Alter strategy. Now we consider infinitely many plays of $\mathcal{G}(F)$ and show that their output covers whole class of abstraction of $\sim$. 

Let $(v_i)_{n=1}^\infty$ of elements in $\Z_2^+$ be a sequence of Alter replies (in all plays), enumerated by the order of moves. Ego spreads Alters replies among all plays in a way which are described by the algorithm below.
There are two types of Ego's moves: $\Start_i(\alpha)$ and $\Move_i(\alpha)$ for $i \in \omega$ and $\alpha \in \Z_2^+$:
\begin{enumerate}[1.]
 \item $\Start_i(\alpha)$ -- Ego starts Play~$i$ with the initial move $\alpha$;
\item $\Move_i(\alpha)$ -- Ego makes a subsequent move $\alpha$ in Play~$i$.
\end{enumerate}

We now present the algorithmic description of the Ego's strategy (in infinitely many plays). It depends on the Alter's strategy which is emphasized as an argument (this is a sort of an input stream to this procedure).
\begin{algorithmic}[0] 
\Procedure{Capture}{Alter Strategy $\sigma$}
\State $\Start_0(0)$
\hfill \textit{ Alter replies: }$v_1$
\For{$i=1$ \textbf{ to }$+\infty$} 
\State{$\Start_i\big(\Theta(0v_1\dots v_{\frac{i(i+3)}2-1},i)\big)$}
\hfill \textit{ Alter replies: }$v_{\frac{i(i+3)}2}$
\For{$j=0$ \textbf{ to }$i$}
\State{$\Move_j\big(v_{\frac{i(i+1)}2+1+j}\dots v_{\frac{i(i+1)}2+i+j}\big)$}
\textit{ Alter replies: }$v_{\frac{i(i+3)}2+j+1}$
\EndFor
\EndFor
\EndProcedure
\end{algorithmic}
Similarly to the previous proof let us illustrate several first moves in a tabular form (cf. Simulation~\ref{sim:CAS}).
 
 \begin{figure}
\begin{center}
\begin{tabular}{l|ccccccccccc} 
\hline
    \multicolumn{12}{|c|}{Play 0} \\ 
    \hline
      Ego &  $0$ & & $v_2$ & & \multicolumn{2}{c}{$v_4v_5$} &  &\multicolumn{3}{c}{$v_7v_8v_9$}&\\
      Alter & & $v_1$ & & $v_3$ & & & $v_6$ &\multicolumn{3}{c}{} &  $v_{10}$ \\
\hline
\hline
    \multicolumn{12}{|c|}{Play 1} \\ 
      \hline
      Ego & \multicolumn{2}{c}{$1v_1$} & & $v_3$& & \multicolumn{2}{c}{$v_5v_6$} &  &\multicolumn{3}{c}{$v_8v_9v_{10}$} \\
      Alter &  &  & $v_2$ & & $v_4$ & & & $v_7$ &\multicolumn{2}{c}{}&\dots \\
\hline
\hline
    \multicolumn{12}{|c|}{Play 2} \\ 
      \hline
      Ego & \multicolumn{5}{c}{$\Theta(0v_1v_2v_3v_4,2)$} & & \multicolumn{2}{c}{$v_6v_7$} && \multicolumn{2}{c}{\dots}\\
      Alter &  \multicolumn{5}{c}{} & $v_5$ & & & $v_8$ \\
\hline
\hline
    \multicolumn{12}{|c|}{Play 3} \\ 
      \hline
      Ego & \multicolumn{9}{c}{$\Theta(0v_1v_2v_3v_4v_5v_6v_7v_8,3)$} & & \dots \\
      Alter &  \multicolumn{9}{c}{} & $v_9$ \\
    \hline
    \end{tabular} \\ \medskip
    \dots
\end{center}
\caption{\label{sim:CAS}Capturing Alter's strategy}
 \end{figure}

Obviously both players make infinitely many moves in each of plays. 
Furthermore, in order to show that this algorithm is correct, we need to show that Alter's replies are properly enumerated (i.e. id of the element coincide with the replies number). This proof is a straightforward application of the ``loop invariant'' method; for the details we refer the reader to the classical book \cite{CorLeiRivSte09}.

Now let $r_i \in \Z_2^\omega$ be the output of Play~$i$ ($i\in\omega$).
As Ego rewrites all Alter answers except the initial move we obtain 
\Eq{*}{
r_i=\Theta(r_0,i)\quad \text{ for all }i\in \omega.
}

Therefore if Alter has a winning strategy we get $\{r_i\colon i \in \omega\} \cap F =\emptyset$, and thus $[r_0]_\sim \cap F=\emptyset$.
\end{proof}

 \section{Main result}
  
  In this brief section we present three results. First of them is presented in the game setting approach, while second and third one are topological properties of thin sets.
  
 \begin{prop}
  Let $T \subset \Z_2^\omega$ be a thin set such that $T\cap X \ne \emptyset$ for all $X \in \Z_2^\omega/_\sim$. Then the game $\mathcal{G}(T)$ is undetermined.
 \end{prop}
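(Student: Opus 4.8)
The plan is to combine the two necessary conditions established in Propositions~\ref{prop:NessPro} and~\ref{prop:NecVer}: recall that a game $\mathcal{G}(F)$ is undetermined exactly when \emph{neither} player has a winning strategy, so it suffices to rule out a winning strategy for Ego and for Alter separately, using the two hypotheses on $T$ one at a time.

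First I would invoke Proposition~\ref{prop:NessPro} with $F=T$: since $T$ is assumed to be thin, Ego has no winning strategy in $\mathcal{G}(T)$. This uses only thinness and nothing about the equivalence classes.

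Next I would argue that Alter has no winning strategy, by contradiction. If Alter had a winning strategy in $\mathcal{G}(T)$, then Proposition~\ref{prop:NecVer} (again with $F=T$) would produce an equivalence class $X\in\Z_2^\omega/_\sim$ with $X\cap T=\emptyset$; but this directly contradicts the standing assumption that $T\cap X\ne\emptyset$ for every $X\in\Z_2^\omega/_\sim$. Hence Alter has no winning strategy either.

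Putting the two halves together, neither Ego nor Alter has a winning strategy in $\mathcal{G}(T)$, so the game is undetermined, as claimed. I do not expect any real obstacle here: the whole content sits in the two preceding propositions, and the two hypotheses on $T$ are precisely tailored to their statements (thinness is the hypothesis of Proposition~\ref{prop:NessPro}, and meeting every $\sim$-class is exactly the negation of the conclusion of Proposition~\ref{prop:NecVer}). One may additionally remark, appealing to Proposition~\ref{prop:3}, that sets $T$ satisfying both hypotheses do exist — e.g. any maximal thin set — so that the statement is not vacuous.
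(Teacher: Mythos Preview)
Your proposal is correct and matches the paper's own proof essentially line for line: the paper likewise applies Proposition~\ref{prop:NessPro} to rule out an Ego winning strategy (using thinness) and Proposition~\ref{prop:NecVer} to rule out an Alter winning strategy (using that $T$ meets every $\sim$-class). Your additional remark about non-vacuousness via Proposition~\ref{prop:3} is a harmless extra not present in the paper's proof.
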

\begin{proof}
Since $T$ is thin, Proposition~\ref{prop:NessPro} implies that Ego has no winning strategy in $\mathcal{G}(T)$.
On the other hand as $T\cap X \ne \emptyset$ for all $X \in \Z_2^\omega/_\sim$ by Proposition~\ref{prop:NecVer} we get that Alter has no winning strategy in $\mathcal{G}(T)$, too. 
\end{proof}

Now, applying Corollary~\ref{cor:detBor}, we can formulate the main result of this paper. 
 \begin{thm}\label{thm:NB}
  If $T \subset \Z_2^\omega$ is a thin set such that $T\cap X \ne \emptyset$ for all $X \in \Z_2^\omega/_\sim$ then $T$ is not Borel.
 \end{thm}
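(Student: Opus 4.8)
The plan is to derive the statement from two facts already established: the determinacy of $\mathcal{G}(F)$ for Borel $F$ (Corollary~\ref{cor:detBor}), and the preceding Proposition, which asserts that under the present hypotheses the game $\mathcal{G}(T)$ is \emph{undetermined}. I would argue by contradiction. Assume $T$ is Borel. Then Corollary~\ref{cor:detBor} applies and yields that $\mathcal{G}(T)$ is determined, i.e.\ one of Ego, Alter has a winning strategy.

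Next I would exclude both possibilities by invoking the two necessary conditions separately. Since $T$ is thin, Proposition~\ref{prop:NessPro} shows that Ego has no winning strategy in $\mathcal{G}(T)$. For Alter, I would use the contrapositive of Proposition~\ref{prop:NecVer}: if Alter had a winning strategy, there would exist a class $X \in \Z_2^\omega/_\sim$ with $X \cap T = \emptyset$, contradicting the hypothesis that $T$ meets every $\sim$-class. Hence neither player wins, so $\mathcal{G}(T)$ is undetermined, contradicting the conclusion of the previous paragraph. Therefore $T$ is not Borel. Observe that the hypotheses of Theorem~\ref{thm:NB} coincide verbatim with those of the preceding Proposition, so the argument can be compressed to a single line ("$\mathcal{G}(T)$ is undetermined by that Proposition, while Borel sets give determined games by Corollary~\ref{cor:detBor}"); I would still write it in the slightly expanded form above to keep the two-sided use of Propositions~\ref{prop:NessPro} and~\ref{prop:NecVer} transparent.

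As for the main obstacle: at this level there is essentially none — all the substance has already been spent in establishing Propositions~\ref{prop:NessPro} and~\ref{prop:NecVer}, the latter being the technical heart (covering an entire $\sim$-class by the outputs $r_i = \Theta(r_0,i)$ of plays run against a fixed Alter strategy, with the index bookkeeping $\tfrac{i(i+3)}{2}$, $\tfrac{i(i+1)}{2}$ etc.\ justified by a loop-invariant argument). Granting those, Theorem~\ref{thm:NB} is a purely formal consequence of the Borel determinacy of Banach--Mazur games.
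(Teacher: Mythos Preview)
Your proposal is correct and matches the paper's approach exactly: the paper states Theorem~\ref{thm:NB} as an immediate consequence of the preceding Proposition (undeterminacy of $\mathcal{G}(T)$ under these hypotheses) together with Corollary~\ref{cor:detBor}, without even writing out a separate proof block. Your expanded version, making the roles of Propositions~\ref{prop:NessPro} and~\ref{prop:NecVer} explicit, is a faithful unpacking of that one-line derivation.
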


As a singleton is a thin set, by Proposition~\ref{prop:3} we easily obtain
\begin{cor}\label{cor:maxBor}
Maximal thin sets are neither Borel nor meager. 
 \end{cor}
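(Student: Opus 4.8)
The plan is to combine Theorem~\ref{thm:NB} with Proposition~\ref{prop:3} and the Banach--Mazur theorem. The statement of Corollary~\ref{cor:maxBor} has two assertions for a maximal thin set $T_0$: that it is not Borel and that it is not meager. Both will follow once I observe that a maximal thin set meets every $\sim$-class.

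First I would record the key structural fact: if $T_0 \subset \Z_2^\omega$ is a maximal thin set, then $T_0 \cap X \ne \emptyset$ for every $X \in \Z_2^\omega/_\sim$. Indeed, pick any $x \in X$; the singleton $\{x\}$ is thin (trivially, as $\hd$ on a one-point set gives $\HD(\{x\})=+\infty\ge 2$, or simply because every projection is injective on a singleton), so by Proposition~\ref{prop:3} there is a maximal thin set containing it --- but more directly, if $T_0$ were disjoint from $X$, then for $x \in X$ the set $T_0 \cup \{x\}$ would still be thin: any two distinct points of $T_0 \cup \{x\}$ are either both in $T_0$, hence at Hamming distance $\ge 2$, or one is $x$ and the other is some $t \in T_0$ with $t \not\sim x$ (since $t \in T_0$ and $x \notin X \supseteq$ the class of $t$... wait, this needs care). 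Let me instead argue cleanly: suppose $t \in T_0$ with $\hd(t,x)=1$. Then $t \sim x$, so $t \in X$, contradicting $T_0 \cap X = \emptyset$. Hence $\HD(T_0\cup\{x\})\ge 2$, so by Lemma~\ref{lem:ThinEq} it is thin, contradicting maximality of $T_0$. Therefore $T_0$ meets every class.

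With that in hand, $T_0$ satisfies the hypotheses of Theorem~\ref{thm:NB}, so $T_0$ is not Borel. For the ``not meager'' part, suppose toward a contradiction that $T_0$ is meager. Then by the Banach--Mazur theorem (part~(1)) Alter has a winning strategy in $\mathcal{G}(T_0)$, and hence by Proposition~\ref{prop:NecVer} there is an $X \in \Z_2^\omega/_\sim$ with $X \cap T_0 = \emptyset$ --- contradicting the fact just established that $T_0$ meets every class. So $T_0$ is not meager.

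The only mild subtlety --- and the step I would treat most carefully --- is the verification that a maximal thin set meets every $\sim$-class, since one must check that adding a putative missing point $x$ does not create a Hamming-distance-$1$ pair with any existing element of $T_0$; this is exactly where the implication $\hd(t,x)=1 \Rightarrow t\sim x$ is used, together with the disjointness assumption $T_0\cap[x]_\sim=\emptyset$. Everything else is an immediate citation of the already-proven Theorem~\ref{thm:NB}, Proposition~\ref{prop:NecVer}, and the classical Banach--Mazur theorem. In fact the ``moreover'' clause of Proposition~\ref{prop:3} already packages this observation (a maximal thin set restricts to a maximal, in particular nonempty, thin subset of each class), so one could simply invoke it and skip the hands-on argument entirely.
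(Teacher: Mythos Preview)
Your argument is correct and follows the same route as the paper: use the ``moreover'' clause of Proposition~\ref{prop:3} (or the equivalent direct check you give) to see that a maximal thin set meets every $\sim$-class, then invoke Theorem~\ref{thm:NB} for non-Borelness and Proposition~\ref{prop:NecVer} together with the Banach--Mazur theorem for non-meagerness. The paper compresses all of this into the single sentence ``As a singleton is a thin set, by Proposition~\ref{prop:3} we easily obtain\ldots,'' leaving the non-meager part implicit via the preceding undeterminedness proposition; your write-up simply unpacks that sentence.
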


\section{\label{sec:xor} Applications to Xor-sets}
We show that xor-sets introduced by Niwi\'nski--Kopczy\'nski \cite{NiwKop14} are maximal thin sets. 
Before we go into details, let us introduce some sort of conjugency. For $x \in \Z_2^\omega$ and $n \in \omega$ we define $x^{\#n}\in \Z_2^\omega$ by 
$$x^{\#n} (k):=\begin{cases} 
x(k) & k \in \omega \setminus \{n\},\\
1-x(n) &k=n.
\end{cases}$$
For fixed $n \in \omega$ the operator $(\cdot)^{\#n}$ is a symmetry, i.e. $(x^{\#n})^{\#n}=x$ for all $x \in \Z_2^\omega$. We are now in the position to present the main definition of this section. 

\begin{defin}
 A set $\XS \subset \Z_2^\omega$ is called a \emph{xor-set} if for every $n \in \omega$ and $x \in \Z_2^\omega$ we have
 $x \in \XS \iff x^{\#n} \notin \XS$.
 \end{defin}

Since $(\cdot)^{\#n}$ is a symmetry we can easily check that for every xor-set $\XS$ the set $\Z_2^\omega \setminus \XS$ is a xor-set, too. We prove that xor-sets are maximal thin sets. Before we go into details let us introduce few technical notions. 

First, let us define the relation $\approx$ on $\Z_2^\omega$ by 
$$x \approx y :\iff \hd(x,y)\text{ is finite and even.}$$

Observe that $x \approx y$ implies $x \sim y$. Moreover each element of $\Z_2^\omega/_\sim$ split into two elements of $\Z_2^\omega/_\approx$. 
Therefore, as $\Z_2^\omega/_\sim$ has a cardinality continuum, one consider a partition of $\Z_2^\omega$ to a family of disjoint sets $\mathcal{U}:=\{U_{\eta,j}\colon \eta \in \R\text{ and }j \in \{0,1\}\}$ such that $U_{\eta,0},U_{\eta,1} \in \Z_2^\omega/_\approx$ and $U_{\eta,0}\cup U_{\eta,1} \in \Z_2^\omega/_\sim$ ($i \in\R$). These sets play an essential role in the theory of xor-sets. 

\begin{lem}
Set $\XS \subset \Z_2^\omega$ is a xor-set if and only if there exists a selector $\mathcal{S}$ of $\big\{\{U_{\eta,0},U_{\eta,1}\} \colon \eta \in \R\big\}$  such that $\XS =\bigcup \mathcal{S}$.
\end{lem}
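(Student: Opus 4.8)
The statement characterizes xor-sets as unions of selectors of the pairs $\{U_{\eta,0},U_{\eta,1}\}$. The plan is to prove both implications directly from the definitions, the key observation being that for fixed $n$ the operator $(\cdot)^{\#n}$ changes exactly one coordinate, hence alters the Hamming distance to any fixed reference point by $\pm 1$; in particular it maps each $\approx$-class onto the \emph{other} $\approx$-class inside the same $\sim$-class, i.e. it swaps $U_{\eta,0}$ and $U_{\eta,1}$ whenever it acts on an element of $U_{\eta,0}\cup U_{\eta,1}$. I would isolate this as a preliminary remark: if $x\in U_{\eta,j}$ then $x^{\#n}\in U_{\eta,1-j}$, because $\hd(x,x^{\#n})=1$ is finite and odd.

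For the ``if'' direction, suppose $\XS=\bigcup\mathcal{S}$ for a selector $\mathcal{S}$ of $\big\{\{U_{\eta,0},U_{\eta,1}\}\colon\eta\in\R\big\}$; thus for each $\eta$ exactly one of $U_{\eta,0},U_{\eta,1}$ is contained in $\XS$ and the other is disjoint from $\XS$. Fix $n\in\omega$ and $x\in\Z_2^\omega$; let $\eta,j$ be such that $x\in U_{\eta,j}$. By the preliminary remark $x^{\#n}\in U_{\eta,1-j}$. Since exactly one of the two sets $U_{\eta,0},U_{\eta,1}$ lies in $\XS$, we get $x\in\XS\iff U_{\eta,j}\subseteq\XS\iff U_{\eta,1-j}\not\subseteq\XS\iff x^{\#n}\notin\XS$, which is precisely the xor-set condition.

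For the ``only if'' direction, let $\XS$ be a xor-set. I first claim that $\XS$ is a union of $\approx$-classes: if $x\approx y$ then $\hd(x,y)=2m$ is finite and even, so $y$ is obtained from $x$ by flipping $2m$ coordinates one at a time, i.e. $y=x^{\#n_1\cdots\#n_{2m}}$ for suitable (distinct) $n_1,\dots,n_{2m}$; applying the xor-set condition $2m$ times gives $x\in\XS\iff y\in\XS$. Hence for each $\eta$ either $U_{\eta,0}\subseteq\XS$ or $U_{\eta,0}\cap\XS=\emptyset$, and likewise for $U_{\eta,1}$. It remains to see that \emph{exactly one} of $U_{\eta,0},U_{\eta,1}$ is contained in $\XS$: pick any $x\in U_{\eta,0}$; then $x^{\#n}\in U_{\eta,1}$ for any $n$, and the xor-set condition forces $x\in\XS\iff x^{\#n}\notin\XS$, so exactly one of $x,x^{\#n}$ lies in $\XS$, hence exactly one of $U_{\eta,0},U_{\eta,1}$ does. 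Defining $\mathcal{S}:=\{U_{\eta,j_\eta}\colon\eta\in\R\}$ with $U_{\eta,j_\eta}$ the class contained in $\XS$ yields a selector with $\bigcup\mathcal{S}=\XS$ (the inclusion $\supseteq$ is immediate; $\subseteq$ holds since every $x\in\XS$ lies in some $U_{\eta,j}$, and that class must be the one in $\mathcal{S}$ because the other is disjoint from $\XS$).

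The routine but load-bearing point is the passage ``finite even Hamming distance $=$ composition of an even number of single-coordinate flips,'' which requires noting that the flipped coordinates are distinct and that order does not matter; everything else is bookkeeping with the selector. I do not anticipate a genuine obstacle — the lemma is essentially a repackaging of the observation that $(\cdot)^{\#n}$ swaps the two $\approx$-halves of a $\sim$-class — but care is needed to phrase the selector so that it picks one set from each pair rather than one point.
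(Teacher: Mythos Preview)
Your proposal is correct and follows essentially the same approach as the paper: both hinge on the observation that $(\cdot)^{\#n}$ swaps the two $\approx$-classes inside a given $\sim$-class, and that $y\approx x$ means $y$ is reached from $x$ by an even number of single-bit flips. Your write-up is in fact more complete than the paper's, which handles the ``only if'' direction tersely and leaves the ``if'' direction largely implicit.
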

\begin{proof}
Let $\eta \in \R$. Observe that for all $x \in U_{\eta,0}\cup U_{\eta,1}$ and $n \in \omega$ we have $x^{\#n}\sim x$ and $x^{\#n}\not\approx x$. Therefore $x \in U_{\eta,0} \iff x^{\#n} \in U_{\eta,1}$. 

Furthermore for all $y \in \Z_2^\omega$ such that $y \approx x$ there exists $k \in \N$ and a set $\{i_1,\dots,i_{2k}\}$ such that
$y=(\cdots((x^{\#i_1})^{\#i_2})\cdots)^{\#i_{2k}}$. Whence $x\in \XS$ implies $[x]_\approx \subset \XS$. 

Furthermore $x \in U_{\eta,0} \iff y\in U_{\eta,0}$ and $x \in U_{\eta,1} \iff y\in U_{\eta,1}$. 
Thus $[x]_\approx \in \{U_{\eta,0},U_{\eta,1}\}$. As $[x]_\sim$ contains two classes of abstraction of $\approx$ we get 
$$\big\{U_{\eta,0},U_{\eta,1}\big\}=\big\{[x]_\approx,([x]_\sim \setminus [x]_\approx)\big\}.$$
By the definition every xor-set contains exactly one element of each pair, which implies our assertion. 
\end{proof}

\begin{prop}
Every xor-set $\XS \subset \Z_2^\omega$ is a maximal thin set. In particular, $\XS$ is not Borel.
\end{prop}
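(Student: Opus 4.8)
The plan is to establish two things: first, that every xor-set $\XS$ is thin; second, that it is \emph{maximal} among thin sets. The ``not Borel'' clause will then follow immediately from Theorem~\ref{thm:NB} (or directly from Corollary~\ref{cor:maxBor}), because a xor-set meets every class of $\Z_2^\omega/_\sim$. So the real content is thinness and maximality, and for this I would lean on the characterization just proved in the preceding lemma: $\XS = \bigcup \mathcal{S}$ for a selector $\mathcal{S}$ of $\big\{\{U_{\eta,0},U_{\eta,1}\}\colon \eta \in \R\big\}$, i.e. $\XS$ picks out exactly one of the two $\approx$-classes inside each $\sim$-class.

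First I would prove thinness via Lemma~\ref{lem:ThinEq}, showing $\HD(\XS)\ge 2$. Suppose $x,y\in\XS$ with $\hd(x,y)=1$; then $y=x^{\#n}$ for the unique coordinate $n$ where they differ. But the defining property of a xor-set says $x\in\XS \iff x^{\#n}\notin\XS$, so $x$ and $x^{\#n}$ cannot both lie in $\XS$ — contradiction. Hence $\hd(x,y)\ne 1$ for distinct $x,y\in\XS$, so $\HD(\XS)\ge 2$ and $\XS$ is thin. (Equivalently: $x\sim x^{\#n}$ but $x\not\approx x^{\#n}$, so a single bit flip always crosses from one $U_{\eta,j}$ to the other, and $\XS$ contains only one of them.)

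Next, maximality. Let $T$ be any thin set with $\XS\subseteq T$; I want $T=\XS$. Suppose not, and pick $z\in T\setminus\XS$. Since $\XS$ is a selector over the pairs $\{U_{\eta,0},U_{\eta,1}\}$, the point $z$ lies in some class $U_{\eta,1-j}$ whose partner $U_{\eta,j}\subseteq\XS$. Choose any $w\in U_{\eta,j}\subseteq\XS\subseteq T$ with $z\approx w$; more carefully, I would produce a point of $\XS$ at Hamming distance exactly $1$ from $z$. To do this, note $z\in U_{\eta,1-j}$ and $z^{\#n}\in U_{\eta,j}\subseteq\XS$ for every $n\in\omega$ (since a single flip moves between the two $\approx$-classes of the $\sim$-class, as established in the lemma's proof). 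Thus $z^{\#0}\in\XS\subseteq T$ and $z\in T$ with $\hd(z,z^{\#0})=1$, contradicting $\HD(T)\ge 2$ from Lemma~\ref{lem:ThinEq}. Hence no such $z$ exists and $T=\XS$, so $\XS$ is a maximal thin set.

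Finally, for ``not Borel'': $\XS$ meets every $X\in\Z_2^\omega/_\sim$ (it contains one of the two $\approx$-classes comprising $X$, hence is nonempty on $X$), and $\XS$ is thin, so Theorem~\ref{thm:NB} gives that $\XS$ is not Borel; this is exactly the ``in particular'' clause. The only step requiring genuine care is the maximality argument — specifically, being sure that from an arbitrary $z\notin\XS$ one can exhibit a $\XS$-point at Hamming distance $1$, which is where the fact that every single coordinate flip swaps the two $U_{\eta,j}$ (proved in the preceding lemma) does the work. Everything else is a direct unwinding of the definitions.
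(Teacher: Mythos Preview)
Your proof is correct and follows essentially the same route as the paper. For thinness you argue directly from the defining equivalence $x\in\XS\iff x^{\#n}\notin\XS$ (the paper instead notes via the selector lemma that pairwise distances in $\XS$ are infinite or even, but this amounts to the same thing); for maximality both you and the paper pick $z\notin\XS$, observe $z^{\#n}\in\XS$ by the xor-set definition, and derive a Hamming-distance-$1$ contradiction; and your appeal to Theorem~\ref{thm:NB} is equivalent to the paper's appeal to Corollary~\ref{cor:maxBor}. The detour through the selector description of $\XS$ in your maximality step is unnecessary---the xor-set definition already gives $z\notin\XS\Rightarrow z^{\#0}\in\XS$ directly---but it does no harm.
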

\begin{proof}
As $\XS$ a xor-set we have that for all $x,y \in \XS$ the distance $\hd(x,y)$ is either infinite of even. Then the implication $(iii)\Rightarrow(i)$ in Lemma~\ref{lem:ThinEq} yields that $\XS$ is a thin set.

To show the maximality assume to the contrary that there exist a xor-set $\XS$ and an element $x \in \Z_2^\omega \setminus \XS$ such that $\XS \cup\{x\}$ is thin. 
 By the definition of xor-set we have $x^{\#1} \in \XS$. However $\pi_1(x)=\pi_1(x^{\#1})$ which lead to a contradiction as $\XS \cup\{x\}$ was supposed to be thin. 
 
 The remaining part is a straightforward implication of Corollary~\ref{cor:maxBor}.
\end{proof}

As a complementary of a xor-set is a xor-set we obtain the following interesting property.
\begin{cor}\label{cor:X1}
 There exists two non-Borel, thin and disjoint sets $T_0,\ T_1 \subset \Z_2^\omega$ such that $T_0\cup T_1=\Z_2^\omega$.
\end{cor}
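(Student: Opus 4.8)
The plan is to extract the statement of Corollary~\ref{cor:X1} directly from the preceding proposition and its immediate predecessor. By the preceding proposition, every xor-set $\XS \subset \Z_2^\omega$ is a maximal thin set, hence by Corollary~\ref{cor:maxBor} (or by the explicit ``in particular'' clause) it is not Borel; in particular it is not meager. So the first step is simply to produce one concrete xor-set. The Lemma preceding the proposition already guarantees existence: taking any selector $\mathcal{S}$ of the family $\big\{\{U_{\eta,0},U_{\eta,1}\}\colon \eta\in\R\big\}$ yields the xor-set $\XS:=\bigcup\mathcal{S}$, and such a selector exists by the axiom of choice. Set $T_0:=\XS$.

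Next I would invoke the observation, recorded in the paragraph introducing xor-sets, that the complement of a xor-set is again a xor-set: since $(\cdot)^{\#n}$ is an involution, $x\in\Z_2^\omega\setminus\XS \iff x^{\#n}\in\XS \iff x^{\#n}\notin(\Z_2^\omega\setminus\XS)$ fails in exactly the complementary way, so $\Z_2^\omega\setminus\XS$ satisfies the defining equivalence of a xor-set. Hence set $T_1:=\Z_2^\omega\setminus\XS$; this is again a xor-set, therefore again a maximal thin set, and therefore again non-Borel by the same proposition.

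Finally I would assemble the three required properties: $T_0$ and $T_1$ are disjoint and cover $\Z_2^\omega$ by construction (one is the complement of the other); both are thin by the proposition (xor-sets are thin); and both are non-Borel, again by the proposition. This completes the proof. There is essentially no obstacle here — all the content has been front-loaded into the Lemma and Proposition on xor-sets, and the Corollary is a bookkeeping consequence; the only ingredient not already ``inside'' a xor-set statement is the trivial closure of the xor-set property under complementation, which is immediate from $(\cdot)^{\#n}$ being a symmetry.

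\begin{proof}
By the Lemma on selectors there exists a xor-set: pick any selector $\mathcal{S}$ of $\big\{\{U_{\eta,0},U_{\eta,1}\}\colon \eta\in\R\big\}$ and put $T_0:=\bigcup\mathcal{S}$. Let $T_1:=\Z_2^\omega\setminus T_0$. Since each operator $(\cdot)^{\#n}$ is a symmetry, for every $n\in\omega$ and $x\in\Z_2^\omega$ we have $x\in T_1 \iff x\notin T_0 \iff x^{\#n}\in T_0 \iff x^{\#n}\notin T_1$, so $T_1$ is a xor-set as well. By the previous proposition, both $T_0$ and $T_1$ are (maximal) thin sets and neither is Borel. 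Finally $T_0\cap T_1=\emptyset$ and $T_0\cup T_1=\Z_2^\omega$ by the definition of $T_1$.
\end{proof}
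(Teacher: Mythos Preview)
Your proof is correct and follows exactly the paper's approach: the corollary is stated immediately after the observation that the complement of a xor-set is a xor-set, and the paper's one-line justification (``As a complementary of a xor-set is a xor-set we obtain the following interesting property'') is precisely what you have unpacked. Your extra care in citing the selector lemma for existence and re-deriving closure under complementation is fine but already recorded earlier in the section.
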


In fact we can also prove a sort of the reverse statement
\begin{prop}\label{prop:X2}
 Let $T_0$, $T_1$ be two thin sets such that $T_0\cup T_1=\Z_2^\omega$. Then $T_0$ and $T_1$ are disjoint xor-sets.
\end{prop}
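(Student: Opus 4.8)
The plan is to prove the two conclusions — disjointness and the xor-set property — in that order, since disjointness is the key that unlocks the rest.

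\emph{Disjointness.} Suppose $x \in T_0 \cap T_1$. The idea is to derive a contradiction with thinness of one of the two sets using the neighbour $x^{\#n}$ for a well-chosen $n$. Since $T_0 \cup T_1 = \Z_2^\omega$, the element $x^{\#0}$ lies in $T_0$ or in $T_1$; say $x^{\#0} \in T_0$. But $\hd(x, x^{\#0}) = 1$ and both $x, x^{\#0} \in T_0$, so $\HD(T_0) = 1$, contradicting $\HD(T_0) \ge 2$ (Lemma~\ref{lem:ThinEq}, the equivalence $(i)\Leftrightarrow(iii)$). The symmetric case $x^{\#0} \in T_1$ gives the same contradiction in $T_1$. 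Hence $T_0 \cap T_1 = \emptyset$, and in fact the two sets form a partition of $\Z_2^\omega$.

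\emph{Each $T_j$ is a xor-set.} Fix $j \in \{0,1\}$, $n \in \omega$, and $x \in \Z_2^\omega$; I must show $x \in T_j \iff x^{\#n} \notin T_j$. Because $\{T_0,T_1\}$ is a partition, ``$x^{\#n}\notin T_j$'' is equivalent to ``$x^{\#n}\in T_{1-j}$''. For the forward direction, assume $x \in T_j$; then $x^{\#n}$ lies in $T_0$ or $T_1$, but it cannot lie in $T_j$, for then $x$ and $x^{\#n}$ would be two points of $T_j$ at Hamming distance $1$, again contradicting $\HD(T_j)\ge 2$; hence $x^{\#n}\in T_{1-j}$. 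For the converse, assume $x \notin T_j$; by the partition $x \in T_{1-j}$, and running the same argument with the roles of $j$ and $1-j$ swapped (using $(x^{\#n})^{\#n}=x$) shows $x^{\#n}\notin T_{1-j}$, i.e. $x^{\#n}\in T_j$. This establishes the xor-set equivalence for both $T_0$ and $T_1$.

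I expect the only real subtlety — not so much an obstacle as a point requiring care — to be bookkeeping the partition correctly: one must use $T_0\cup T_1=\Z_2^\omega$ together with the just-proved disjointness to freely pass between ``$z\notin T_j$'' and ``$z\in T_{1-j}$'', and one must invoke the symmetry $(\cdot)^{\#n}$ and the thinness of \emph{both} sets, not just one. Everything else reduces to the single observation that $\hd(x,x^{\#n})=1$ combined with Lemma~\ref{lem:ThinEq}. No genuinely hard step is involved; the proposition is essentially a repackaging of the characterisation $\HD(T)\ge 2$ of thinness.
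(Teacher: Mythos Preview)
Your proof is correct and follows essentially the same approach as the paper's: both arguments hinge on the single observation that $\hd(x,x^{\#n})=1$ forces $x^{\#n}$ out of any thin set containing $x$, combined with the covering $T_0\cup T_1=\Z_2^\omega$. The only cosmetic difference is the order of presentation---the paper first records the implication $x\in T_0\Rightarrow x^{\#n}\in T_1$ and then derives disjointness from it, whereas you prove disjointness directly and then use the resulting partition to obtain the xor-set equivalence; the underlying logic is identical.
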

\begin{proof}
 Indeed, as $T_0$ is thin and $T_0\cup T_1=\Z_2^\omega$ we have
\Eq{EH0}{
x \in T_0 \Rightarrow x^{\#n}\notin T_0 \Rightarrow x^{\#n} \in T_1 \qquad \text{for all }x \in \Z_2^\omega\text{ and }n \in \omega.
}
Similarly $x \in T_1 \Rightarrow x^{\#n} \in T_0$, which yields  $x \in T_0 \iff x^{\#n} \in T_1$.

If there existed $x\in T_0\cap T_1$ then by \eq{EH0} we would obtain $x^{\#n} \in T_1$, contradicting the fact that $T_1$ is a thin set. Therefore $T_0\cap T_1=\emptyset$. Then we have 
\Eq{*}{ 
x \in T_i \iff x^{\#n} \notin T_i\qquad \text{ for all }x \in \Z_2^\omega,\ n \in \omega\text{ and }i \in\{0,1\}
}
which shows that both $T_0$ and $T_1$ are xor-sets.
 \end{proof}

\begin{rem}
Applying above results we can easily show that $\XS \subset \Z_2^\omega$ is a xor-set if and only if both $\XS$ and $\Z_2^\omega \setminus \XS$ are thin. 
\end{rem}

\section{Generalization to $k$-thin sets}
At the very end of this section let us just mention the natural generalization of thin sets to $k$-thin sets. Namely, the set 
$T \subset \Z_2^n$ ($n \in\eN$) is called  \emph{$k$-thin} if its minimum distance equals at least $k$. Then $2$-thin sets are precisely thin sets. 

There appears the natural question, if there exists a partition of the Cantor cube to finitely many $k$-thin sets. For $k=2$ the answer easily follows from Proposition~\ref{prop:X2}. We show that it cannot be generalized to $k$-thin sets.

For $k \in \N$ and $n \in \eN$ with $2 \le k \le n$ we define $Q(n,k)$ as the smallest extended natural number $s$ such that there exists a partition of $\Z_2^n$ into $s$ sets which are $k$-thin.

Then we obviously have $Q(n,2)=2$ for all $n \ge 2$. This case turns out to be very special.
\begin{prop}
Let $k \in \N$, $n \in \eN$ with $3 \le k \le n$. Then
\begin{enumerate}
 \item for finite $n$ we have $Q(n,k)\ge {n \choose {\lfloor \frac{k-1}2 \rfloor}}$
 \item for $n =\omega$ we have $Q(\omega,k)=+\infty$.
\end{enumerate}
\end{prop}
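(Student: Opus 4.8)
The plan is to produce, inside $\Z_2^n$, a large \emph{anticlique}: a set $W$ of vectors that are pairwise at Hamming distance strictly less than $k$. Since a $k$-thin set has minimum distance at least $k$, it can contain at most one point of $W$ (two such points would already witness $\HD(\cdot)<k$); hence in any partition of $\Z_2^n$ into $k$-thin sets the points of $W$ must land in $|W|$ pairwise distinct parts, and so $Q(n,k)\ge|W|$.

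The set I would use is $W:=\{x\colon \hd(x,\mathbf 0)=d\}$, the vectors of Hamming weight exactly $d:=\floor{(k-1)/2}$. Because $k\ge 3$ we have $d\ge 1$, and because $d\le(k-1)/2\le(n-1)/2<n$ the set $W$ is non-empty, with $|W|=\binom nd$ when $n$ is finite. The one computation to carry out is: for distinct $x,y\in W$ with supports $S_x,S_y$ one has $\hd(x,y)=|S_x\triangle S_y|=|S_x|+|S_y|-2|S_x\cap S_y|\le 2d$, and $2d=2\floor{(k-1)/2}\le k-1<k$. This establishes the anticlique property, hence $Q(n,k)\ge\binom n{\floor{(k-1)/2}}$, which is part (1).

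For part (2) I would run exactly the same argument in $\Z_2^\omega$: now $W$ is the set of sequences with precisely $d\ge 1$ ones, which is countably infinite, so no partition of $\Z_2^\omega$ into $k$-thin sets can be finite, i.e. $Q(\omega,k)=+\infty$. (Equivalently, one could restrict a hypothetical finite partition of $\Z_2^\omega$ to the cylinder on its first $N$ coordinates, apply part (1) to get at least $\binom Nd$ parts, and let $N\to\infty$.)

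I do not anticipate a substantive obstacle; the proof is elementary once the anticlique is spotted. The only points needing a little care are: the floor inequality $2\floor{(k-1)/2}\le k-1$, which is precisely what fails for $k=2$ and thus explains why the dichotomy begins at $k=3$; the bookkeeping $1\le d<n$ ensuring $W$ genuinely has $\binom nd$ elements; and the reading of $Q(\omega,k)=+\infty$ as the assertion that $\Z_2^\omega$ admits no finite partition into $k$-thin sets. If a sharper bound were desired, the same idea applied to the whole Hamming ball $\{x\colon \hd(x,\mathbf 0)\le d\}$ (every two of whose points are within distance $2d<k$) would improve (1) to $Q(n,k)\ge\sum_{i=0}^{d}\binom ni$.
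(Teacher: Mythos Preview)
Your proof is correct and follows essentially the same approach as the paper: exhibit an anticlique (a set of vectors pairwise at Hamming distance $\le k-1$) and observe that any $k$-thin set can meet it in at most one point. The only cosmetic difference is that the paper takes the full Hamming ball $S=\{x:\hd(x,\mathbf 0)\le d\}$ (using the triangle inequality through $\mathbf 0$) and then bounds $|S|\ge\binom{n}{d}$, whereas you take just the sphere $W=\{x:\hd(x,\mathbf 0)=d\}$; you already note this variant in your final remark.
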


\begin{proof}

Consider an arbitrary partition  $(T_1,\dots,T_q)$ of $\Z_2^n$ to $k$-thin sets, where $q \in \eN$. Furthermore set
\Eq{*}{
S:=\big\{x \in \Z_2^n \colon x_0+\dots+x_{n-1} \le
\floor{\tfrac{k-1}{2}}\big\}.}
Then for all $x,y \in S$ we have 
\Eq{*}{
\hd(x,y)\le \hd(x,0)+\hd(0,y) \le 2\floor{\tfrac{k-1}{2}} \le k-1.
}

In particular each $T_i$ contains at most one element of $S$. Thus in the case where $n$ is finite we obtain
$q \ge |S| \ge {n \choose {\floor{\frac{k-1}2}}}$.
For finite number $n$ we simply get $q \ge |S|=+\infty$.
\end{proof}

Finally, observe that we can reapply Theorem~\ref{thm:NB} and Proposition~\ref{prop:3} to generalize Corollary~\ref{cor:maxBor} as follows
\begin{prop}
Maximal $k$-thin sets are neither Borel nor meager. 
 \end{prop}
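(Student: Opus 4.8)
The plan is to reduce the statement about maximal $k$-thin sets to the $k=2$ machinery already developed, via the observation that the relevant part of the earlier theory was never really special to the value $2$. The key point is that Corollary~\ref{cor:maxBor} was obtained by combining three ingredients: (a) every thin set extends to a maximal thin set (Proposition~\ref{prop:3}), (b) a maximal thin set meets every class of $\Z_2^\omega/_\sim$, hence is not Borel by Theorem~\ref{thm:NB}, and (c) a Borel thin set is meager, so a non-Borel thin set — in particular a maximal one — is not meager. For $k$-thin sets I would check that each of these three ingredients survives.

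First I would record the analogue of Proposition~\ref{prop:3} for $k$-thin sets: since ``$\HD(T)\ge k$'' is a condition of the form ``for all pairs $x\ne y$ in $T$, $\hd(x,y)\ge k$'', the family of $k$-thin sets is closed under unions of chains, so Zorn's lemma gives that every $k$-thin set is contained in a maximal one; moreover the ``moreover'' part of Proposition~\ref{prop:3} goes through verbatim, because whether a pair violates the distance bound depends only on the $\sim$-class the two points share. Next I would note that for $k\ge 2$ every $k$-thin set is in particular $2$-thin, hence thin, so the Banach--Mazur apparatus (Proposition~\ref{prop:NessPro}, Proposition~\ref{prop:NecVer}) applies to it unchanged. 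The only genuinely new thing to verify is that a \emph{maximal} $k$-thin set $T$ meets every class $X\in\Z_2^\omega/_\sim$. This follows from maximality: fix any $x\in X$; by maximality of $T$ one cannot have $T\cup\{x\}$ be $k$-thin with $x\notin T$, but if $T\cap X=\emptyset$ then every $y\in T$ lies in a different $\sim$-class, so $\hd(x,y)=+\infty\ge k$ for all $y\in T$, whence $T\cup\{x\}$ \emph{is} $k$-thin — a contradiction unless $x\in T$, i.e. $X\cap T\ne\emptyset$.

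With those three points in hand the argument closes quickly. A maximal $k$-thin set $T$ is thin and meets every $\sim$-class, so Theorem~\ref{thm:NB} gives that $T$ is not Borel. For meagerness I would invoke \cite[Proposition~9.3]{BanGlaJabSwa1803} (quoted in the introduction): every Borel thin subset of $\Z_2^\omega$ is meager; equivalently, a non-meager thin set is non-Borel. Since $T$ is thin and non-Borel, it cannot be meager. Hence maximal $k$-thin sets are neither Borel nor meager, which is the assertion. (One may instead run the meagerness half directly through the Banach--Mazur theorem: if $T$ were meager, Alter would have a winning strategy in $\mathcal{G}(T)$, so by Proposition~\ref{prop:NecVer} some $\sim$-class would miss $T$, contradicting that $T$ meets every class.)

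I expect no serious obstacle: the whole content is the bookkeeping check that ``thin'' in the statements of Propositions~\ref{prop:NessPro} and~\ref{prop:NecVer} can be replaced by the weaker hypothesis ``$k$-thin$\,\Rightarrow\,$thin'' used only through the minimum-distance-$\ge2$ consequence, and that maximality in the $k$-thin family still forces hitting every $\sim$-class. If anything needs care it is making sure the quoted result of Banakh et al.\ is being used in the direction ``Borel thin $\Rightarrow$ meager'' (contrapositive: non-meager thin $\Rightarrow$ non-Borel was \emph{not} what we need — we need the statement itself, applied to rule out meagerness), so in the write-up I would phrase the non-meagerness step through the Banach--Mazur route to keep the logic self-contained within the paper.
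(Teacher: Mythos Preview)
Your approach is essentially the paper's own: reapply Proposition~\ref{prop:3} and Theorem~\ref{thm:NB} using that $k$-thin implies thin and that maximality forces $T$ to meet every $\sim$-class. The argument you give for the latter (any $x$ in a missed class has $\hd(x,y)=+\infty$ to every $y\in T$) is correct and is exactly the content of the ``moreover'' clause of Proposition~\ref{prop:3} in this setting.

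One point to clean up: your first pass at non-meagerness via \cite[Proposition~9.3]{BanGlaJabSwa1803} is a non-sequitur. ``Borel thin $\Rightarrow$ meager'' has contrapositive ``non-meager thin $\Rightarrow$ non-Borel'', and neither direction lets you deduce non-meagerness from non-Borelness. You seem to sense this and switch to the Banach--Mazur route in your parenthetical; that route is the correct one (and is what the paper uses, via the undeterminedness of $\mathcal{G}(T)$): if $T$ were meager, Alter would win, contradicting Proposition~\ref{prop:NecVer} since $T$ meets every $\sim$-class. Drop the Banakh et al.\ detour entirely in the write-up.
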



\end{document}